\definecolor{darkblue}{rgb}{0,0,0.6}
\newtheorem*{rep@theorem}{\rep@title}
\newcommand{\newreptheorem}[2]{%
	\newenvironment{rep#1}[1]{%
		\def\rep@title{#2 \ref{##1}}%
		\begin{rep@theorem}}%
		{\end{rep@theorem}}}
\newtheorem{proposition}{Proposition}[section]
\newtheorem{theorem}[proposition]{Theorem}
\theoremstyle{definition}
\newtheorem{definition}[proposition]{Definition}
\newtheorem{question}[proposition]{Question}
\theoremstyle{remark}
\newtheorem{remark}[proposition]{Remark}
\newtheorem*{remark*}{Remark}
\numberwithin{equation}{section}
\newcommand{\Z}{\mathbb{Z}}
\newcommand{\sm}{\smallsetminus}
\DeclareMathOperator{\Wh}{\mathrm{Wh}}
\begin{document}
\title{Not all knots are smoothly round handle slice}
	
\author{Tye Lidman}
\address{North Carolina State University, Raleigh NC, USA}
\email{tlid@math.ncsu.edu}

\author{Allison N.~Miller}
\address{Swarthmore College, Swarthmore PA, USA}
\email{amille11@swarthmore.edu}

\author{Arunima Ray}
\address{The University of Melbourne, Melbourne VIC, Australia}
\email{aru.ray@unimelb.edu.au}

\def\subjclassname{\textup{2020} Mathematics Subject Classification}
\expandafter\let\csname subjclassname@1991\endcsname=\subjclassname
\subjclass{
57K40, 
57N35. 
}
\keywords{}

\begin{abstract}
    Freedman and Krushkal showed that if the surgery conjecture and the $s$-cobordism conjecture hold for all topological 4-manifolds, then every link with pairwise zero linking numbers is topologically \emph{round handle slice}. Kim, Powell, and Teichner showed that every knot is topologically round handle slice. We show that infinitely many knots fail to be smoothly round handle slice.
\end{abstract}

\maketitle

\section{Introduction}\label{sec:intro}

Two of the main open problems in $4$-dimensional topology are the topological surgery conjecture and the topological $s$-cobordism conjecture. These  conjectures are known to be true for $4$-manifolds with so-called good fundamental groups~\cite{FQ}*{Theorems~7.1A and~11.3A}, but are open in general. Given a link $L\subseteq S^3$, Freedman and Krushkal~\cite{freedman-krushkal:engel} defined a compact, smooth $4$-manifold~$R(L)$ and an associated link $\gamma(L)\subseteq \partial R(L)$. They showed that if both the topological surgery and $s$-cobordism conjectures hold in general, then for every $L$ with vanishing pairwise linking numbers, the associated link $\gamma(L)$ bounds a collection of locally flat, pairwise disjoint, embedded discs in $R(L)$. When this is true, we say that $L$ is \emph{topologically round handle (RH) slice} (see~\cref{def:RHslice}). Finding links which fail to be topologically RH-slice is an attractive possible route to disproving the surgery and $s$-cobordism conjectures. 

A simpler proof for Freedman--Krushkal's theorem was presented by Kim, Powell, and Teichner in \cite{kim-powell-teichner:RHP}. A consequence of their argument is that every knot is topologically RH-slice. The main result of this short note is that, in contrast, not all knots are smoothly RH-slice.

\begin{theorem}\label{thm:main}
    Let $K$ be the $(-2,4m-1)$-cable of the torus knot $T_{2,6m+1}$ for $m \geq 1$. Then $K$ is not smoothly round handle slice. 
\end{theorem}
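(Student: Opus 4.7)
The plan is to translate the smooth round handle slice condition into a concrete smooth sliceness problem in an auxiliary 4-manifold, and then obstruct it using Heegaard Floer invariants together with cabling formulas.

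First, I would unpack the smooth structure of $R(K)$ and $\gamma(K)$ for a knot $K$. Following Freedman--Krushkal's construction and the streamlined version of Kim--Powell--Teichner, $R(K)$ should admit a concrete handle decomposition obtained from $B^4$ by attaching only a round handle (equivalently a $1$-handle plus a $2$-handle with matching framings), with $\gamma(K)$ a specific framed circle in $\partial R(K)$ determined by $K$. Writing down the explicit Kirby diagram is essential for controlling the smooth topology of the situation.

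Next, I would observe that if $\gamma(K)$ bounds a smoothly embedded disc $D \subset R(K)$, then capping $D$ off with the cores of the round handle(s) of $R(K)$ produces a smoothly embedded surface of controlled genus in an auxiliary smooth $4$-manifold $W$ whose intersection form is computable from the Kirby diagram. Equivalently, this yields a smooth concordance from $K$ to a simple knot inside $W$. The precise topology of $W$ dictates which Heegaard Floer invariant is best suited to derive the obstruction.

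Finally, I would apply a Heegaard Floer slice-genus bound in $W$ to reach a contradiction. Since $T_{2,6m+1}$ is an L-space knot with $\tau = 3m$ and $\varepsilon = +1$, Hom's cabling formula for negative cables computes $\tau(K) = \tau(C_{-2,4m-1}(T_{2,6m+1}))$ explicitly, and the parameters $4m-1$ and $6m+1$ are tuned so that the resulting value violates the $\tau$-bound imposed by the hypothetical smoothly embedded disc in $W$. If $W$ turns out to be indefinite so that $\tau$ alone does not suffice, the same conclusion should be accessible through the Ozsv\'ath--Szab\'o $d$-invariants of a natural Dehn surgery on $K$, exploiting that $K$, as a cable of an L-space knot, has a completely understood knot Floer complex. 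The main obstacle I expect is the explicit identification of $W$ (and of the resulting genus bound) from the round handle description; once $W$ is pinned down, the cabling computation and invariant comparison are essentially routine.
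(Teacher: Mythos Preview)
Your outline has the right flavor---Kirby calculus on $R(K)$ followed by Heegaard Floer obstructions---but it misses two structural features that the paper's argument confronts explicitly, and your ``cap off $D$ with handle cores to get a surface in a single $W$'' maneuver is too vague to reveal them.

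First, the hypothetical slice disc $D \subset R(K)$ for $\gamma(K)$ comes with an \emph{unknown} normal framing $m_K \in \Z$, so there is not a single auxiliary $4$-manifold to obstruct but an infinite family. The paper does not cap $D$; it deletes $\mathring{\nu}(D)$ from $R(K)$, obtaining $X_D$ with $H_1(X_D) \cong \Z$ generated by an explicit curve $\alpha$ in the boundary. Attaching a $2$-handle along $\alpha$ then yields an integer homology ball whose boundary is identified by Kirby calculus as $S^3_{-1/m_K}(K)$ when $m_K \neq 0$. One must therefore obstruct \emph{all} $1/n$-surgeries on $K$ from bounding a homology ball; the paper reduces this to $d(S^3_{\pm 1}(K)) \neq 0$ via Ni--Wu's result that $d(S^3_{1/n}(K))$ depends only on the sign of $n$, not via a $\tau$-genus bound in a fixed $W$.

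Second, the case $m_K = 0$ is genuinely different and does not produce a $1/n$-surgery on $K$ at all: Kirby calculus identifies the relevant boundary instead as $S^3_1(\Wh^+(K))$. This Whitehead double branch is essential and requires its own argument (Hedden's formula gives $\tau(\Wh^+(K)) > 0$ since $\tau(K) > 0$, whence $\nu^+ > 0$ and $d(S^3_1(\Wh^+(K))) \neq 0$ by Hom--Wu). Your plan does not anticipate this branch. The cabling computation you sketch for $\tau(K)$ is indeed used in the paper---it gives $\tau(K) = 4m > 0$---but only to feed this Whitehead double step, not as a direct slice-genus obstruction in some ambient $W$.
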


The proof consists of showing that if a knot is smoothly RH-slice, then one of a list of associated integer homology spheres must bound an integer homology ball (\cref{prop:ZHB-bound}). 
Then we use invariants from Heegaard--Floer homology to obstruct this for the knots mentioned in the statement above.

\subsection*{Acknowledgements} 
We thank the University of Glasgow and the Centre de Recherches Mathematiques for their hospitality, as well as Jen Hom and Adam Levine for helpful conversations.

\section{Definitions}\label{sec:definitions}

An $n$-dimensional \emph{round $k$-handle} is a copy of $S^1\times D^k\times D^{n-k-1}$ attached along $S^1\times \partial D^k\times D^{n-k-1}$. In particular, given a pair of disjoint, embedded circles $C_1,C_2$ in $\partial X$ for $X$ an $n$-manifold, a round $1$-handle attached along $C_1$ and $C_2$  identifies tubular neighbourhoods of $C_1$ and $C_2$ with respect to given framings. Round handles were introduced by Asimov in~\cite{asimov:round-handles}.

Let $L=L_1\sqcup \dots \sqcup L_n\subseteq S^3$ be an $n$-component oriented ordered link. Let $X_L\coloneqq S^3\sm \mathring{\nu}L$ denote the \emph{exterior} of $L$, i.e.~the complement of an open tubular neighbourhood of $L$. Let $\mu_i\subseteq X_L$ be an oriented meridian of $L_i$, the $i$th component of $L$, and let $\lambda_i\subseteq X_L$ denote a $0$-framed oriented longitude of $L_i$, pushed further into $X_L$ as indicated on the left of Figure~\ref{fig:R(K)} when $L$ is a knot.
Let $\nu(\mu_i),\nu(\lambda_i)\subseteq X_L$ denote closed  tubular neighbourhoods of $\mu_i$ and $\lambda_i$.
\begin{figure}[htb]
    \centering
    \begin{tikzpicture}
\node[anchor=south west, inner sep=0] at (0,.4)
{\includegraphics[height=1.8cm]{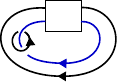}};
\node[anchor=south west,inner sep=0] at (4,0)
{\includegraphics[height=2.8cm]{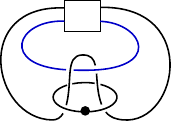}};
\node at (1.4,1.8){$K$};
\node at (.2,.6){$\lambda$};
\node at (.95, 1.2){$\mu$};
\node at (4.2,.2){$0$};
\node at (7.2,1.05){$\gamma(K)$};
\node at (11.3, .25){$\gamma(K)$};
\node at (8.5,1){$\cong$};
\node at (5.9, 2.45){$K$};
\node at (12.25, 1.55){$K$};
\node at (13.25,1){$0$};
\node[anchor=south west,inner sep=0] at (9,.5)
{\includegraphics[height=1.6cm]{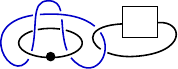}};
\end{tikzpicture}
    \caption{A knot $K$ with meridian $\mu$ and longitude $\lambda$ (left). Attaching a round handle identifying $\mu$ and $\lambda$ gives $R(K)$, with $\gamma(K) \subseteq \partial R(K)$ (center). We slide $\gamma(K)$ over the 2-handle to obtain an alternate diagram (right). }
    \label{fig:R(K)}
\end{figure}
\begin{definition}\label{def:RHslice}
    Let $L=L_1\sqcup \dots \sqcup L_n\subseteq S^3$ be an $n$-component oriented ordered link. Define the manifold $R(L)$ to be the result of attaching $4$-dimensional round $1$-handles to $B^4$, such that the $i$th round handle is attached along $\nu(\mu_i)\sqcup \nu(\lambda_i)\subseteq X_L\subseteq S^3$ for $i=1,\dots,n$, using the $0$-framing. The image of $L$ in $\partial R(L)$ after this procedure is called $\gamma(L)$. 
    The link $L$ is said to be \emph{smoothly round handle (RH) slice} if $\gamma(L)$ is smoothly slice in~$R(L)$, i.e.~ if it bounds a pairwise disjoint collection of smoothly embedded discs in $R(L)$. If the discs are only locally flat embedded, then $L$ is said to be \emph{topologically round handle (RH) slice}.
\end{definition}

\begin{remark}
Note that 
$R(L)\cong X_0(L)\,\natural \,n(S^1\times D^3),$
where $X_0(L)$ is the trace of the $0$-framed Dehn surgery on $S^3$ along all the components of $L$. See \cref{fig:R(K)} for a Kirby diagram when $L$ is a knot, adapted from~\cite{kim-powell-teichner:RHP}.
\end{remark}

\begin{remark}
    Every knot $K$ bounds a PL disc in $B^4$, and this disc survives the round handle attachment to become a PL disc in $R(K)$ with boundary $\gamma(K)$. So every knot is PL RH-slice.
\end{remark}

\section{Proofs}\label{sec:proofs}
\begin{proposition}\label{prop:ZHB-bound}
    Let $K\subseteq S^3$ be a knot which is smoothly round handle slice. Then either
    \begin{enumerate}
        \item $S^3_{1/n}(K)$ is the boundary of a smooth integer homology ball for some $n \neq 0$, or
        \item $S^3_1(\Wh^+(K))$ is the boundary of a smooth integer homology ball.
    \end{enumerate}
\end{proposition}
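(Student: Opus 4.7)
The plan is to use the hypothetical smooth disc $D \subseteq R(K)$ with $\partial D = \gamma(K)$ to directly construct a smooth ZHB bounding one of the two specified $3$-manifolds, with the dichotomy arising from an intersection number. Work in the handle decomposition $R(K) = B^4 \cup h^1 \cup h^2$ from the right of \cref{fig:R(K)}, where $h^2$ is the $0$-framed $2$-handle on $K$. Let $C \subseteq R(K)$ be the cocore of $h^2$---a properly embedded disc whose boundary is the meridian $\mu$ of $K$ in the $S^3_0(K)$ summand of $\partial R(K)$. After putting $D$ and $C$ in general position, set $n := D \cdot C \in \Z$.

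When $n \neq 0$, the plan is to obtain conclusion (1) with this value of $n$. Carve out $D$ by setting $W := R(K) \setminus \mathring{\nu}(D)$. Using the long exact sequence of the pair $(R(K), W)$ together with the computation that the intersection pairing $H_2(R(K)) = \Z \to H_2(R(K),W) = \Z$ is multiplication by $\pm n$, one obtains $H_2(W) = 0$ and $H_1(W) \cong \Z$, the latter generated by a pushoff $s$ of the core circle of the $S^1 \times D^3$ summand of $R(K)$. Attach a $2$-handle to $W$ along $s$ with the natural framing from the obvious disc in $S^1 \times D^3$; this kills the generator of $H_1$ without creating $H_2$, yielding a smooth ZHB $V$. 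A Kirby calculus computation---essentially a slam-dunk relating the added $2$-handle on $s$ to the disc-framed surgery on $\gamma(K)$---identifies $\partial V \cong S^3_{1/n}(K)$.

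When $n = 0$, isotope $D$ off $C$ so that it lies in $R(K) \setminus \nu(C) \cong S^1 \times D^3$ (the cocore cancels $h^2$). Then $\gamma(K)$ lies in $\partial(S^1 \times D^3) = S^1 \times S^2$, and under the identification $S^1 \times S^2 \cong S^3 \# (S^1 \times S^2)$ obtained by undoing the $0$-surgery on $K$, $\gamma(K)$ becomes the original $K$ sitting in the $S^3$ summand. Thus $K$ bounds a smooth disc in $S^1 \times D^3$; a classical satellite argument promotes this into a smooth slice disc $\Delta$ for $\Wh^+(K)$ in $B^4$. Capping off the core of the $+1$-framed $2$-handle in $X_1(\Wh^+(K))$ with $\Delta$ produces a sphere of self-intersection $+1$; blowing it down yields the required ZHB with boundary $S^3_1(\Wh^+(K))$, giving conclusion (2). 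The main obstacle is expected to be the Kirby calculus identification in the $n \neq 0$ case, namely pinning down the boundary as $S^3_{1/n}(K)$ and confirming that $V$ is a genuine integer (not just rational) homology ball; the $n = 0$ case mostly packages standard facts about Whitehead doubles and smooth discs in $S^1 \times D^3$.
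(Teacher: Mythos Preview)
Your overall structure—carve out $D$ to form $W$, attach a $2$-handle to kill $H_1$, then identify the boundary by Kirby calculus—matches the paper's. The flaw is that you have misidentified the invariant governing both the dichotomy and the boundary. The integer $n = D \cdot C$ is not what enters the long exact sequence of $(R(K),W)$: that map $H_2(R(K)) \to H_2(R(K), W)$ is multiplication by the intersection number of $D$ with the closed generator of $H_2(R(K))$, and this is always $\pm 1$, because $[\gamma(K)] = \pm[\mu_K]$ in $H_1(\partial R(K))$ and hence $[D] = \pm[C]$ in $H_2(R(K),\partial R(K))$. So $H_1(W)\cong\Z$ always holds, but not for your stated reason, and more importantly $\partial V$ never sees $n$. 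What $\partial V$ depends on is the disc-framing $m_K$ that $D$ induces on $\gamma(K)$—this is the integer that appears in the surgery description of $\partial W$—and the paper's Kirby calculus gives $\partial V \cong S^3_{-1/m_K}(K)$. The correct dichotomy is $m_K \neq 0$ versus $m_K = 0$; nothing forces $n = -m_K$, so your identification $\partial V \cong S^3_{1/n}(K)$ is unsupported.

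Your $n = 0$ branch has further gaps. The step ``isotope $D$ off $C$'' does not follow from $D\cdot C = 0$: since $\pi_1(R(K))\cong\Z$, the equivariant intersection number of $D$ and $C$ lives in $\Z[t^{\pm1}]$ and need not vanish just because its augmentation $n$ does, so the Whitney circles need not even be nullhomotopic, let alone bound embedded Whitney discs. The ``classical satellite argument'' that a smooth disc for $K$ in $S^1\times D^3$ yields a smooth slice disc for $\Wh^+(K)$ in $B^4$ is also not a standard fact and would itself require proof. The paper handles the corresponding case ($m_K = 0$) without any such isotopy: it shows by direct Kirby moves that $\partial W \cong S^3_0(\Wh^+(K))$ with the $H_1$-generator $\alpha$ carried to a meridian, and then a single $(-1)$-framed $2$-handle on $\alpha$ produces the required integer homology ball with boundary $S^3_{1}(\Wh^+(K))$.
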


\begin{proof}
    Suppose that $\gamma(K)$ bounds a smoothly embedded disc $D$ in $R(K)$, and denote the complement of an open tubular neighbourhood of $D$ by $X_D\coloneqq R(K)\sm \mathring{\nu}(D)$. We see that $X_D$ is a smooth, compact $4$-manifold with  $\partial X_D\eqqcolon Y(K,m_K)$ for some $m_K\in \Z$, where $Y(K,m_K)$ is the $3$-manifold shown in \cref{fig:boundary}. 
    \begin{figure}[htb]
    \centering
     \begin{tikzpicture}
\node[anchor=south west,inner sep=0] at (0,0)
{\includegraphics[height=1.8cm]{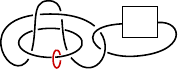}};
\node at (3.6,1.2){$K$};
\node at (1.5,-0.2){$\alpha$};
\node at (4.85,1){$0$};
\node at (0,-.1){$m_K$};
\node at (.3,.6){$0$};
\end{tikzpicture}
    \caption{A surgery diagram for $\partial X_D\eqqcolon Y(K,m_K)$, with a  curve $\alpha \subseteq \partial X_D$. The integer $m_K$ arises because we do not know how $D$ frames $\gamma(K)$. 
    }
    \label{fig:boundary}
\end{figure}
   
    Using the Mayer--Vietoris sequence, we see that 
    \[
    \widetilde{H_i}(X_D;\Z)\cong \begin{cases}
        \Z\langle\alpha\rangle  &i=1;\\
        0   &\textrm{otherwise,}
    \end{cases}
    \]
    where $\alpha$ is the curve shown in \cref{fig:boundary}. 
   We now have two cases.

    \textbf{Case 1: $m_K\neq 0$.} Let $W$ be obtained from $X_D$ by  attaching a $0$-framed $2$-handle to $\alpha\subseteq \partial X_D$. By construction, $W$ is an integer homology ball, and we show via Kirby calculus (\cref{fig:mkneq0}) that  $\partial W=S^3_{-1/m_K}(K)$.

\begin{figure}[htb]
    \centering
         \begin{tikzpicture}
\node[anchor=south west,inner sep=0] at (0,0)
{\includegraphics[height=1.8cm]{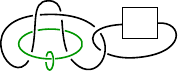}};
\node[anchor=south west,inner sep=0] at (6.2,.25)
{\includegraphics[height=1.5cm]{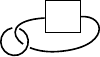}};
\node at (1.4,-0.2){$0$};
\node at (4.7,1){$0$};
\node at (5.4,.9){$\cong_{\partial}$};
\node at (3.6,1.3){$K$};
\node at (-0.1,0){$m_K$};
\node at (.35,.5){$0$};
\node at (9.1,1){$0$};
\node at (7.9,1.3){$K$};
\node at (6.1,.1){$m_K$};
 \end{tikzpicture}
    \caption{Our initial surgery diagram for $\partial W$ (left) is modified by a cancellation of the green curves to obtain a new diagram (right). Then perform a slam dunk on the $m_K$-framed curve to obtain the standard diagram for $S^3_{-1/m_K}(K)$.}
    \label{fig:mkneq0}
\end{figure}

    \textbf{Case 2: $m_K= 0$.} First modify the surgery diagram for $Y(K,0)$ as shown in \cref{fig:mk0-first}, tracking the circle $\alpha$. 
    \begin{figure}[htb]
    \centering
      \begin{tikzpicture}
\node[anchor=south west,inner sep=0] at (0,0)
{\includegraphics[height=2.2cm]{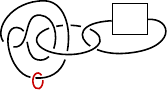}};
\node at (1.75,2){$0$};
\node at (2,0.7){$0$};
\node at (3.,0.6){$0$};
\node at (3.2,1.7){$K$};
\node at (4.7,1.3){$\cong_{\partial}$};
\node[anchor=south west,inner sep=0] at (5.2,0)
{\includegraphics[height=2.2cm]{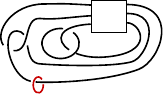}};
\node at (5.2,2){$0$};
\node at (6.4,1.2){$0$};
\node at (7.5,1.1){$0$};
\node at (7.7,1.85){$K$};
\node[anchor=south west,inner sep=0] at (10,0)
{\includegraphics[height=2.2cm]{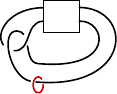}};
\node at (9.5,1.3){$\cong_\partial$};
\node at (10,2){$0$};
\node at (11.5,1.85){$K$};
\node at (6.3,-0.1){$\alpha$};
\node at (11.1,-0.1){$\alpha$};
\node at (1.2,-0.1){$\alpha$};
 \end{tikzpicture}
    \caption{Starting with \cref{fig:boundary} for $m_K=0$, we first do an isotopy to obtain the left diagram, then a handle slide to obtain the center diagram, then a slam dunk to obtain the right diagram, while tracking $\alpha$ throughout.}
    \label{fig:mk0-first}
\end{figure}
We obtain that $Y(K,0)\cong S^3_0(\Wh^+(K))$ via a diffeomorphism mapping~$\alpha$ to a meridian of $\Wh^+(K)$. Attach a $(-1)$-framed $2$-handle to $\partial X_D$ along $\alpha \subseteq Y(K,0)$. As before, the result is an integer homology ball, and  blowing down the  $(-1)$-framed curve shows that its boundary is $S^3_1(\Wh^+(K))$.
\end{proof}


\begin{proof}[Proof of \cref{thm:main}]

An integer homology sphere $Y$ that bounds an integer homology ball must have $d(Y)=0$, where $d$ is the Heegaard--Floer correction term of~\cite{OS03-dinv}.
Work of Ni--Wu~\cite{NiWu}*{Proposition 1.6} implies that $d(S^3_{1/n}(K))$ depends only on the sign of $n$, and so
 Proposition~\ref{prop:ZHB-bound} implies it will suffice to show that $d(S^3_{+1}(K)), d(S^3_{-1}(K))$, and $d(S^3_{+1}(\Wh^+(K))$ are all nonzero. 

Golla observed (see \cite{LidmanTweedy}*{Proof of Theorem 1.3}) that the cabling formulas from~\cite{Hom14-eps} and \cite{WuCabling} imply that $d(S^3_{+1}(K))$ and $ d(S^3_{-1}(K))$ are both nonzero. We now show that $d(S^3_{+1}(\Wh^+(K)))\neq 0$.
Hom and Wu defined a knot invariant called $\nu^+$, such that for any knot $J$ the following hold~\cite{HomWu}*{Proposition 2.3}:
\begin{enumerate}
    \item[(a)] $d(S^3_{+1}(J))=0$ if and only if $\nu^+(J)=0$; and 
    \item[(b)]$\nu^+(J) \geq \tau(J)$, where $\tau$ is the Heegaard--Floer knot invariant defined in~\cite{OS03-tauinv}. 
\end{enumerate} 
So we will be done if we can show that $\tau(\Wh^+(K))>0$, which by~\cite{Hedden07-Wh}*{Theorem 1.4} is true if and only if $\tau(K)>0$. 
We  now use \cite{Hom14-eps}*{Theorem 1} and the well-known computations of $\tau$ and $\varepsilon$ for torus knots to obtain
\begin{align*}
\tau(-K)=\tau(C_{2,4m-1}(T_{-2,6m+1}))
&= -2\frac{(2-1)(6m+1-1)}{2}+\frac{(2-1)(4m-1+1)}{2}=-4m.
\end{align*}
So $\tau(K)=4m>0$ as desired.
\end{proof}


One might hope for simpler examples of knots which are not smoothly RH-slice. In particular, it seems unclear whether the trefoil is smoothly RH-slice. We note that the statements of Proposition~\ref{prop:ZHB-bound} are certainly not the strongest conclusions one can draw from knowing that a knot is smoothly RH-slice, just ones that sufficed for our purpose of establishing an infinite family of non smoothly RH-slice knots. 
We therefore ask the following.
\begin{question}
    Is there a non-smoothly slice knot that is smoothly RH-slice?
\end{question}

Since the topological RH-slice problem is related to the topological $s$-cobordism theorem and topological surgery conjecture, it is also natural to ask if one could use the failure of the smooth RH-slice problem to construct new counterexamples in the smooth category.  For example, while the smooth $s$-cobordism conjecture is known to be false by Donaldson's work \cite{Donaldson87}, it would be interesting if the failure of smooth RH-slicing could lead to new constructions of exotica. 
\def\MR#1{}
\bibliography{bib}
\end{document}